\newtheorem{thm}{Theorem}
\newtheorem{cor}{Corollary}
\theoremstyle{remark}
\newtheorem{rem}{Remark}
\theoremstyle{definition}
\title{Generalized retarded integral inequalities}
\author{Rui A. C. Ferreira\footnote{Supported by FCT through the PhD fellowship SFRH/BD/39816/2007.}\\
\texttt{ruiacferreira@ua.pt}
\and Delfim F. M. Torres\footnote{Supported by FCT through the R\&D unit CEOC, cofinanced by the EC fund FEDER/POCI 2010.}\\
\texttt{delfim@ua.pt}}
\date{Department of Mathematics\\
University of Aveiro\\
3810-193 Aveiro, Portugal}
\begin{document}

\maketitle

\begin{abstract}
We prove some new retarded integral inequalities. The results
generalize those in [J. Math. Anal. Appl. 301 (2005), no. 2,
265--275].

\bigskip

\noindent \textbf{Keywords:} retarded inequalities,
integral inequalities, retarded differential equations,
global existence, time delay.

\bigskip

\noindent \textbf{2000 Mathematics Subject Classification:} 26D15,
26D20.

\end{abstract}

%%%%%%%%%%%%%%%%%%%%%%%%%%%%%%%%%%%%%%%%%%%%%%%%%%

\section{Introduction}

In \cite{lip} O.~Lipovan proved some retarded versions of the
inequalities of Ou-Iang \cite{ou} and Pachpatte \cite{Pach}. From
Lipovan's retarded inequalities, some criteria are obtained
ensuring the global existence of solutions to the generalized
Li{\'e}nard equation with time delay, and to a retarded Rayleigh
type equation \cite{lip}. More recently, Y.G.~Sun generalized the
Lipovan's retarded inequalities \cite{yuan}. In this note we prove two theorems that give as corollaries all the results obtained in
the above cited papers. Our results can be used to further study
the global existence of solutions to differential equations with a time delay. Several generalizations in different directions than
that followed here are found in the literature (see \textrm{e.g.}
\cite{agar,Jiang,lip06,Ma,Ma07,Ma08,Xu,Zhao}).

%%%%%%%%%%%%%%%%%%%%%%%%%%%%%%%%%%%%%%%%%%%%%%%%%%

\section{Main results}
\label{sec:mainResults}

Throughout we use the notation $\mathbb{R}_0^+=[0,\infty)$,
$\mathbb{R}^+=(0,\infty)$, and $Dom(f)$ and Im$(f)$ to denote, respectively, the domain and the image of a function $f$.

\begin{thm}
\label{teor1} Let $f(t,s)$ and $g(t,s)$ $\in
C(\mathbb{R}_0^+\times\mathbb{R}_0^+,\mathbb{R}_0^+)$ be
nondecreasing in $t$ for every $s$ fixed. Moreover, let $\phi\in
C(\mathbb{R}_0^+,\mathbb{R}_0^+)$ be a strictly increasing
function such that $\lim_{x\rightarrow\infty}\phi(x)=\infty$ and
suppose that $c\in C(\mathbb{R}_0^+,\mathbb{R}^+)$ is a
nondecreasing function. Further, let $\eta, w\in
C(\mathbb{R}_0^+,\mathbb{R}_0^+)$ be nondecreasing with
$\{\eta,w\}(x)>0$ for $x\in(0,\infty)$ and
$\int_{x_0}^\infty\frac{ds}{\eta(\phi^{-1}(s))}=\infty$, with
$x_0$ defined as below. Finally, assume that $\alpha\in
C^1(\mathbb{R}_0^+,\mathbb{R}_0^+)$ is nondecreasing with
$\alpha(t)\leq t$. If $u\in C(\mathbb{R}_0^+,\mathbb{R}_0^+)$
satisfies
\begin{equation}
\label{in1} \phi(u(t))\leq
c(t)+\int_0^{\alpha(t)}\left[f(t,s)\eta(u(s))w(u(s))+g(t,s)\eta(u(s))\right]ds,\
t\in\mathbb{R}_0^+,
\end{equation}
then there exists $\tau\in\mathbb{R}^+$ such that, for all
$t\in[0,\tau]$, we have
$$\Psi(p(t))+\int_0^{\alpha(t)}f(t,s)ds\in Dom(\Psi^{-1}),$$
and
\begin{equation}
\label{in2} u(t)\leq
\phi^{-1}\left\{G^{-1}\left(\Psi^{-1}\left[\Psi(p(t))
+\int_0^{\alpha(t)}f(t,s)ds\right]\right)\right\},
\end{equation}
where
$$G(x)=\int_{x_0}^x\frac{ds}{\eta(\phi^{-1}(s))},$$
with $x\geq c(0)>x_0>0$ if
$\int_{0}^x\frac{ds}{\eta(\phi^{-1}(s))}=\infty$ and $x\geq
c(0)>x_0\geq 0$ if
$\int_{0}^x\frac{ds}{\eta(\phi^{-1}(s))}<\infty$,
\begin{align*}
p(t)&=G(c(t)) +\int_0^{\alpha(t)}g(t,s)ds,\\
\Psi(x)&=\int_{x_1}^x\frac{ds}{w(\phi^{-1}(G^{-1}(s)))},\ x>0,\
x_1>0.
\end{align*}
Here, $G^{-1}$ and $\Psi^{-1}$ are the inverse functions of $G$
and $\Psi$, respectively.
\end{thm}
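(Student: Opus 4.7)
The proof is a two-stage reduction to Bihari-type inequalities, together with the usual trick for handling the dependence of $f,g,c$ on the external variable $t$ via monotonicity.

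First, fix an arbitrary $T\in\mathbb{R}_0^+$ and work on $[0,T]$. Since $c,f(\cdot,s),g(\cdot,s)$ are nondecreasing in the first argument, replacing $t$ by $T$ in $c(t), f(t,s), g(t,s)$ in (\ref{in1}) yields a right-hand side which still dominates $\phi(u(t))$ for every $t\in[0,T]$. Define
\[
z(t)=c(T)+\int_0^{\alpha(t)}\bigl[f(T,s)\eta(u(s))w(u(s))+g(T,s)\eta(u(s))\bigr]\,ds,
\]
so that $u(t)\le\phi^{-1}(z(t))$, $z(0)=c(T)\ge c(0)>x_0$, and $z$ is nondecreasing and $C^1$. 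Differentiating and using that $\alpha(t)\le t$ together with the monotonicity of $\phi^{-1}$, $\eta$, $w$ and $z$, I would bound $u(\alpha(t))\le\phi^{-1}(z(\alpha(t)))\le\phi^{-1}(z(t))$ to get
\[
z'(t)\le\alpha'(t)\eta(\phi^{-1}(z(t)))\bigl[f(T,\alpha(t))w(\phi^{-1}(z(t)))+g(T,\alpha(t))\bigr].
\]

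Second, divide by $\eta(\phi^{-1}(z(t)))>0$, recognize the left-hand side as $\tfrac{d}{dt}G(z(t))$, integrate from $0$ to $t$, and apply the change of variable $\sigma=\alpha(s)$ (valid since $\alpha\in C^1$ is monotone) to obtain
\[
G(z(t))\le G(c(T))+\int_0^{\alpha(t)}g(T,\sigma)\,d\sigma+\int_0^{t}\alpha'(s)f(T,\alpha(s))w(\phi^{-1}(G^{-1}(G(z(s)))))\,ds.
\]
Writing $v(t)=G(z(t))$ and $p_T(t)=G(c(T))+\int_0^{\alpha(t)}g(T,\sigma)\,d\sigma$, this is a Bihari inequality for $v$ with $p_T$ in the role of a nondecreasing ``initial'' term. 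Freezing $p_T$ at $p_T(t_1)$ on $[0,t_1]$, defining the corresponding majorant $V$ and dividing $V'/w(\phi^{-1}(G^{-1}(V)))$, the left-hand side becomes $\tfrac{d}{dt}\Psi(V(t))$; a second integration and the same change of variable give
\[
\Psi(v(t))\le\Psi(p_T(t))+\int_0^{\alpha(t)}f(T,\sigma)\,d\sigma
\]
for $t\in[0,T]$.

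Finally, specializing $T=t$ (so $p_T(t)=p(t)$ and $z(t)$ still dominates $\phi(u(t))$) and applying $\Psi^{-1}$, then $G^{-1}$, then $\phi^{-1}$ yields (\ref{in2}). The existence of $\tau$ is obtained by noting that all inverse functions above are continuous and defined at $t=0$: at $t=0$ the argument of $\Psi^{-1}$ equals $\Psi(G(c(0)))\in\mathrm{Im}(\Psi)$, so by continuity it remains in $\mathrm{Dom}(\Psi^{-1})$ on some interval $[0,\tau]$. The main technical hurdle I anticipate is threefold: keeping the two-parameter dependence on $T$ and $t$ straight through the two nested Bihari arguments; justifying the substitution $\sigma=\alpha(s)$ when $\alpha$ is merely nondecreasing (handled by standard measure-theoretic change of variables for $C^1$ monotone maps); and verifying that the monotone majorization $z(\alpha(t))\le z(t)$ and the freeze-at-$t_1$ step really reduce the problem to a separable ODE inequality, which is the core of the argument.
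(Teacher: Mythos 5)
Your proposal is correct and follows essentially the same route as the paper: fix the external parameter (your $T$, the paper's $t_0$) using the monotonicity of $c$, $f$, $g$ in $t$, then run two nested Bihari-type steps — first dividing by $\eta(\phi^{-1}(z))$ to bring in $G$, then by $w(\phi^{-1}(G^{-1}(\cdot)))$ to bring in $\Psi$ — and finally evaluate at $t=T$; the existence of $\tau$ is handled by the same continuity argument as the paper's Remark. The only cosmetic difference is that the paper bounds the $g$-term by $\int_0^{\alpha(t_0)}g(t_0,s)\,ds$ immediately (so the second Bihari step starts from the constant $p(t_0)$), whereas you carry the nondecreasing $p_T(t)$ along and freeze it later.
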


\begin{rem}
We note that $\Psi$ is a strictly increasing function. Hence, if
$\Psi$ is unbounded we obviously have that
$$\Psi(p(t))+\int_0^{\alpha(t)}f(t,s)ds\in Dom(\Psi^{-1})$$ for all
$t\in\mathbb{R}^+_0$. Consider the case that Im$(\Psi)=(m,M)$,
where $m<\Psi(p(0))$ and $M=\sup\{\Psi(x):x\in(0,\infty)\}$. Let
us fix a number $\delta>0$ and consider $\tau\in(0,\delta)$ such
that
$$0\leq\int_0^{\alpha(t)}f(t,s)ds<M-\Psi(p(\delta)),\ t\in[0,\tau].$$
This number $\tau$ certainly exists since
$\int_0^{\alpha(t)}f(t,s)ds$ is a continuous function and
$\int_0^{\alpha(0)}f(0,s)ds=0$. From the above inequality we can
write
$$\Psi(p(t))+\int_0^{\alpha(t)}f(t,s)ds\leq\Psi(p(\delta))+\int_0^{\alpha(t)}f(t,s)ds<M,\ t\in[0,\tau],$$
that is,
$$\Psi(p(t))+\int_0^{\alpha(t)}f(t,s)ds\in Dom(\Psi^{-1}),\ t\in[0,\tau].$$
\end{rem}

\begin{proof}
Letting $t=0$ in (\ref{in1}), we observe that inequality
(\ref{in2}) holds trivially for $t=0$. Fixing an arbitrary number
$t_0\in(0,\tau]$, we define on $[0,t_0]$ a positive and
nondecreasing function $z(t)$ by
\begin{equation*}
z(t)=c(t_0)+\int_0^{\alpha(t)}\left[f(t_0,s)\eta(u(s))w(u(s))+g(t_0,s)\eta(u(s))\right]ds.
\end{equation*}
Then, $z(0)=c(t_0)$,
\begin{equation}
\label{in4} u(t)\leq\phi^{-1}(z(t)),\ t\in[0,t_0],
\end{equation}
and
\begin{align*}
z'(t)&=[f(t_0,\alpha(t))\eta(u(\alpha(t)))w(u(\alpha(t)))
+g(t_0,\alpha(t))\eta(u(\alpha(t)))]\alpha'(t)\\
&\leq\eta(\phi^{-1}(z(\alpha(t))))[f(t_0,\alpha(t))w(\phi^{-1}(z(\alpha(t))))
+g(t_0,\alpha(t))]\alpha'(t)\, .
\end{align*}
Since $\alpha(t)\leq t$, we deduce that
\begin{equation*}
\frac{z'(t)}{\eta(\phi^{-1}(z(t)))}\leq[f(t_0,\alpha(t))w(\phi^{-1}(z(\alpha(t))))
+g(t_0,\alpha(t))]\alpha'(t).
\end{equation*}
Integrating the above relation on $[0,t]$ yields
\begin{equation*}
G(z(t))\leq G(c(t_0)) +\int_0^{\alpha(t_0)}g(t_0,s)ds +
\int_0^{\alpha(t)}f(t_0,s)w(\phi^{-1}(z(s)))ds,
\end{equation*}
which implies that
\begin{equation}
\label{in3} z(t)\leq G^{-1}\left[p(t_0) +
\int_0^{\alpha(t)}f(t_0,s)w(\phi^{-1}(z(s)))ds\right] \, .
\end{equation}
Defining $v(t)$ on $[0,t_0]$ by
$$v(t)=p(t_0) +
\int_0^{\alpha(t)}f(t_0,s)w(\phi^{-1}(z(s)))ds,$$ we have that
$v(0)=p(t_0)$ and
\begin{align*}
v'(t)&=[f(t_0,\alpha(t))w(\phi^{-1}(z(\alpha(t))))]\alpha'(t)\\
&\leq[f(t_0,\alpha(t))w(\phi^{-1}(G^{-1}(v(\alpha(t)))))]\alpha'(t),
\end{align*}
\textrm{i.e.},
\begin{equation*}
\frac{v'(t)}{w(\phi^{-1}(G^{-1}(v(t))))}\leq
f(t_0,\alpha(t))\alpha'(t) \, .
\end{equation*}
Integrating this last inequality from $0$ to $t$, we obtain
$$\Psi(v(t))\leq\Psi(v(0))+\int_0^{\alpha(t)}f(t_0,s)ds,$$
and from this we get
\begin{equation}
\label{in5} v(t_0)\leq
\Psi^{-1}\left[\Psi(p(t_0))+\int_0^{\alpha(t_0)}f(t_0,s)ds\right].
\end{equation}
From (\ref{in4}), (\ref{in3}) and (\ref{in5}) we deduce that
$$u(t_0)\leq \phi^{-1}\left\{G^{-1}\left(\Psi^{-1}\left[\Psi(p(t_0))
+\int_0^{\alpha(t_0)}f(t_0,s)ds\right]\right)\right\}.$$ Since
$t_0\leq\tau$ is arbitrary we are done with the proof.
\end{proof}

\begin{thm}
\label{teor2} Let functions $f$, $g$, $\phi$, $c$, $\eta$, $w$,
$\alpha$, $u$, $G$ and $\Psi$ be as in Theorem~\ref{teor1}. If for
all $t\in\mathbb{R}_0^+$ the inequality
\begin{equation}
\label{in6} \phi(u(t))\leq
c(t)+\int_0^{\alpha(t)}f(t,s)\eta(u(s))w(u(s))ds+\int_0^t
g(t,s)\eta(u(s))w(u(s))ds
\end{equation}
holds, then
\begin{equation}
\label{in7} u(t)\leq
\phi^{-1}\left\{G^{-1}\left(\Psi^{-1}\left[\Psi(G(c(t)))
+\int_0^{\alpha(t)}f(t,s)+\int_0^{t}g(t,s)ds\right]\right)\right\}
\end{equation}
for all $t\in[0,\tau]$, where $\tau>0$ is chosen in such a way that
$$
\Psi(G(c(t)))
+\int_0^{\alpha(t)}f(t,s)+\int_0^{t}g(t,s)ds\in Dom(\Psi^{-1}) \, .
$$
\end{thm}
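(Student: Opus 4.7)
The plan is to adapt the single argument used for Theorem~\ref{teor1} to the new structure of~(\ref{in6}). The key observation is that in~(\ref{in6}) the factor $w(u(s))$ now multiplies \emph{both} the $f$- and the $g$-integrals, so the two-stage reduction of Theorem~\ref{teor1} (which first absorbed a $g$-term without $w$ via $G$ and only then dealt with $w$ via $\Psi$) collapses into a single pass that applies $G$ and $\Psi$ consecutively to one auxiliary function.

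More precisely, I would fix an arbitrary $t_0\in(0,\tau]$ and introduce on $[0,t_0]$ the positive, nondecreasing auxiliary function
\[
z(t)=c(t_0)+\int_0^{\alpha(t)}f(t_0,s)\eta(u(s))w(u(s))\,ds+\int_0^{t}g(t_0,s)\eta(u(s))w(u(s))\,ds,
\]
so that $z(0)=c(t_0)$ and $u(t)\le\phi^{-1}(z(t))$ on $[0,t_0]$. Differentiating $z$, using $\alpha(t)\le t$ together with the monotonicity of $\eta$, $w$, $\phi^{-1}$, and $z$ to bound both $u(\alpha(t))$ and $u(t)$ by $\phi^{-1}(z(t))$, and dividing by $\eta(\phi^{-1}(z(t)))\,w(\phi^{-1}(z(t)))$, I obtain
\[
\frac{z'(t)}{\eta(\phi^{-1}(z(t)))\,w(\phi^{-1}(z(t)))}\le f(t_0,\alpha(t))\alpha'(t)+g(t_0,t).
\]
The definitions of $G$ and $\Psi$ identify the left-hand side with $\bigl(\Psi(G(z(t)))\bigr)'$.

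Integrating this differential inequality from $0$ to $t_0$, performing the substitution $\sigma=\alpha(s)$ (note $\alpha(0)=0$) in the $f$-term to produce $\int_0^{\alpha(t_0)}f(t_0,s)\,ds$, and then inverting $\Psi$, $G$, and $\phi$ in succession, while appealing to $u(t_0)\le\phi^{-1}(z(t_0))$, yields~(\ref{in7}) at $t=t_0$; since $t_0\in(0,\tau]$ was arbitrary, the bound extends to all of $[0,\tau]$. The main obstacle, as in Theorem~\ref{teor1}, is to ensure that each of the successive inversions is legitimate: the arguments of $\eta$ and $w$ remain positive because $z(t)\ge c(t_0)>0$, and the quantity $\Psi(G(c(t)))+\int_0^{\alpha(t)}f(t,s)\,ds+\int_0^{t}g(t,s)\,ds$ stays inside $\mathrm{Im}(\Psi)$ by the very choice of $\tau$ in the hypothesis, so that $\Psi^{-1}$ can indeed be applied at $t=t_0$.
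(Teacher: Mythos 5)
Your proof is correct, and it reaches exactly the bound (\ref{in7}), but it is organized differently from the paper's. The paper keeps the two-stage structure of the proof of Theorem~\ref{teor1}: it first divides $z'(t)$ only by $\eta(\phi^{-1}(z(t)))$, integrates to get $G(z(t))\leq G(c(t_0))+\int_0^{\alpha(t)}f(t_0,s)w(\phi^{-1}(z(s)))ds+\int_0^{t}g(t_0,s)w(\phi^{-1}(z(s)))ds$, then introduces a second auxiliary function $v(t)$ equal to that right-hand side, and only afterwards divides $v'(t)$ by $w(\phi^{-1}(G^{-1}(v(t))))$ and integrates to bring in $\Psi$. You instead divide by the full product $\eta(\phi^{-1}(z(t)))\,w(\phi^{-1}(z(t)))$ in one step and observe that the resulting left-hand side is precisely $\bigl(\Psi(G(z(t)))\bigr)'$, since $\Psi'(x)=1/w(\phi^{-1}(G^{-1}(x)))$ and $G'(x)=1/\eta(\phi^{-1}(x))$; a single integration then gives $\Psi(G(z(t_0)))\leq\Psi(G(c(t_0)))+\int_0^{\alpha(t_0)}f(t_0,s)ds+\int_0^{t_0}g(t_0,s)ds$ directly. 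This shortcut is legitimate here precisely because, as you note, $w(u(s))$ multiplies both the $f$- and $g$-integrands in (\ref{in6}) — in Theorem~\ref{teor1} the $g$-term lacks the factor $w$ and the two passes cannot be collapsed. What your route buys is brevity and a transparent explanation of why the answer has the nested form $G^{-1}\circ\Psi^{-1}$; what the paper's route buys is uniformity, since its proof of Theorem~\ref{teor2} is line-for-line parallel to that of Theorem~\ref{teor1}. The remaining points you flag (positivity of $z(t)\geq c(t_0)>0$, $\alpha(0)=0$ for the change of variables, and the choice of $\tau$ guaranteeing membership in $Dom(\Psi^{-1})$) are handled the same way in both arguments.
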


\begin{proof}
Letting $t=0$ in (\ref{in6}), we observe that inequality
(\ref{in7}) holds trivially for $t=0$. Fixing an arbitrary number
$t_0\in(0,\tau]$, we define on $[0,t_0]$ a positive and
nondecreasing function $z(t)$ by
\begin{equation*}
z(t)=c(t_0)+\int_0^{\alpha(t)}f(t_0,s)\eta(u(s))w(u(s))ds+\int_0^t
g(t_0,s)\eta(u(s))w(u(s))ds\, .
\end{equation*}
Then, $z(0)=c(t_0)$,
\begin{equation}
\label{in8} u(t)\leq\phi^{-1}(z(t)), \quad t\in[0,t_0] \, ,
\end{equation}
and, since $\alpha(t)\leq t$,
\begin{align*}
z'(t)&=f(t_0,\alpha(t))\eta(u(\alpha(t)))w(u(\alpha(t)))\alpha'(t)
+g(t_0,t)\eta(u(t))w(u(t))\\
&\leq\eta(\phi^{-1}(z(t)))[f(t_0,\alpha(t))w(\phi^{-1}(z(\alpha(t))))\alpha'(t)
+g(t_0,\alpha(t))w(\phi^{-1}(z(t))]\, ,
\end{align*}
\textrm{i.e.},
\begin{equation*}
\frac{z'(t)}{\eta(\phi^{-1}(z(t)))}\leq
f(t_0,\alpha(t))w(\phi^{-1}(z(\alpha(t))))\alpha'(t)+g(t_0,t)w(\phi^{-1}(z(t))
\, .
\end{equation*}
Integrating the above relation on $[0,t]$ yields
\begin{equation*}
G(z(t))\leq
G(c(t_0))+\int_0^{\alpha(t)}f(t_0,s)w(\phi^{-1}(z(s)))ds+\int_0^{t}g(t_0,s)w(\phi^{-1}(z(s))ds
,
\end{equation*}
which implies that
\begin{equation}
\label{in9} z(t)\leq G^{-1}(v(t)), \quad t\in[0,t_0]
\end{equation}
where $v(t)$ is defined by
$$v(t)=G(c(t_0))+\int_0^{\alpha(t)}f(t_0,s)w(\phi^{-1}(z(s)))ds
+ \int_0^{t}g(t_0,s)w(\phi^{-1}(z(s)))ds.$$ Note that
$v(0)=G(c(t_0))$ and
\begin{align*}
v'(t)&=[f(t_0,\alpha(t))w(\phi^{-1}(z(\alpha(t))))]\alpha'(t)
+g(t_0,t)w(\phi^{-1}(z(t)))\\
&\leq
w(\phi^{-1}(G^{-1}(v(t))))[f(t_0,\alpha(t))\alpha'(t)+g(t_0,t)],
\end{align*}
\textrm{i.e.},
\begin{equation*}
\frac{v'(t)}{w(\phi^{-1}(G^{-1}(v(t))))}\leq
f(t_0,\alpha(t))\alpha'(t)+g(t_0,t).
\end{equation*}
Integrating this last inequality from $0$ to $t$, we obtain
$$\Psi(v(t))\leq\Psi(v(0))+\int_0^{\alpha(t)}f(t_0,s)ds+\int_0^{t}g(t_0,s)ds,$$
from which we get
\begin{equation}
\label{in10} v(t_0)\leq
\Psi^{-1}\left[\Psi(G(c(t_0)))+\int_0^{\alpha(t_0)}f(t_0,s)ds+\int_0^{t_0}g(t_0,s)ds\right].
\end{equation}
From (\ref{in8}), (\ref{in9}) and (\ref{in10}), we deduce that
$$u(t_0)\leq \phi^{-1}\left\{G^{-1}\left(\Psi^{-1}\left[\Psi(G(c(t_0)))
+\int_0^{\alpha(t_0)}f(t_0,s)ds+\int_0^{t_0}g(t_0,s)ds\right]\right)\right\}.$$
Since $t_0$ is arbitrary, inequality (\ref{in7}) is true.
\end{proof}

% --------------------------------------------------

\section{Corollaries}

Theorem~\ref{teor1} and Theorem~\ref{teor2} generalize previous
results in the literature \cite{lip,yuan}.

Let $m>n>0$ be some constants. Define $\phi(x)=x^m$,
$c(t)=c^{m/(m-n)}$, $c>0$, and $\eta(x)=m/(m-n)x^n$ for
$x\in\mathbb{R}_0^+$. Then,
$$G(x)=\int_0^x\frac{ds}{\eta(\phi^{-1}(s))}
=\frac{m-n}{m}\int_0^x\frac{ds}{s^{n/m}}=x^{(m-n)/m}.$$ We have
$\lim_{x\rightarrow\infty}G(x)=\infty$. Assume that $f(t,s),\
g(t,s)$ do not depend on the variable $t$. Finally, let $x_1=1$.
Then we have the following from Theorem~\ref{teor1}:

\begin{cor}\cite[Theorem~2.1]{yuan}
\label{cor:y} If
$$u^m(t)\leq
c^{m/(m-n)}+\frac{m}{m-n}\int_0^{\alpha(t)}\left[f(s)u^n(s)w(u(s))+g(s)u^n(s)\right]ds,\
t\in\mathbb{R}_0^+,$$ then
$$u(t)\leq
\left\{\Psi^{-1}\left[\Psi\left(c+\int_0^{\alpha(t)}g(s)ds\right)
+\int_0^{\alpha(t)}f(s)ds\right]\right\}^{1/(m-n)},\
t\in[0,\tau],$$ where
$$\Psi(x)=\int_{1}^x\frac{ds}{w(s^{1/(m-n)})},\ x>0,$$
and $\tau\in\mathbb{R}^+$ is chosen such that
$$\Psi\left(c+\int_0^{\alpha(t)}g(s)ds\right)+\int_0^{\alpha(t)}f(s)ds\in Dom(\Psi^{-1}),\ t\in[0,\tau].$$
\end{cor}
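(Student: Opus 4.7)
The plan is to apply Theorem~\ref{teor1} directly with the explicit substitutions announced before the corollary, and then simplify the resulting formula until it matches (\ref{in2}). Because $f$ and $g$ do not depend on $t$ and $c$ is constant, the whole bookkeeping reduces to a few algebraic identities; there is no new analytic content to produce.

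First I would verify that all hypotheses of Theorem~\ref{teor1} hold for the listed choices: $\phi(x)=x^m$ is continuous, strictly increasing, and tends to infinity; $c(t)=c^{m/(m-n)}$ is a positive constant (hence nondecreasing); $\eta(x)=\tfrac{m}{m-n}x^n$ and $w$ are continuous and nondecreasing with $\{\eta,w\}(x)>0$ on $(0,\infty)$; and the divergence condition $\int_{x_0}^{\infty}\frac{ds}{\eta(\phi^{-1}(s))}=\infty$ is satisfied because $\int^\infty \frac{ds}{s^{n/m}}=\infty$ (using $n<m$). Moreover, under the given choice of $\eta$ and $\phi$ we have $\int_0^x\frac{ds}{\eta(\phi^{-1}(s))}<\infty$, which justifies taking $x_0=0$ in the definition of $G$, giving the identity $G(x)=x^{(m-n)/m}$ recorded in the excerpt. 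With the rewriting $\frac{m}{m-n}u^n(s)=\eta(u(s))$, the hypothesis of Corollary~\ref{cor:y} becomes exactly (\ref{in1}) for these data.

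Next I would compute the three derived quantities appearing in (\ref{in2}). Since $G^{-1}(y)=y^{m/(m-n)}$ and $\phi^{-1}(y)=y^{1/m}$, one gets $\phi^{-1}(G^{-1}(s))=s^{1/(m-n)}$, so
\[
\Psi(x)=\int_{x_1}^{x}\frac{ds}{w(\phi^{-1}(G^{-1}(s)))}=\int_{1}^{x}\frac{ds}{w(s^{1/(m-n)})}
\]
on taking $x_1=1$, as in the statement. Likewise
\[
p(t)=G(c(t))+\int_0^{\alpha(t)}g(s)\,ds=c+\int_0^{\alpha(t)}g(s)\,ds,
\]
since $G(c^{m/(m-n)})=c$. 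Plugging these into (\ref{in2}) and using $\phi^{-1}(G^{-1}(y))=y^{1/(m-n)}$ collapses the two outer inverses into the single exponent $1/(m-n)$, producing precisely the bound stated in Corollary~\ref{cor:y}. The existence of $\tau$ comes directly from the conclusion of Theorem~\ref{teor1}.

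The only conceivable obstacle is the computation of the exponents, where one must keep careful track of the composition $\phi^{-1}\circ G^{-1}$ and verify that it really equals $s\mapsto s^{1/(m-n)}$; once this is checked the corollary follows without any further work.
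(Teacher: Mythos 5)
Your proposal is correct and follows exactly the route the paper intends: Corollary~\ref{cor:y} is obtained by specializing Theorem~\ref{teor1} with $\phi(x)=x^m$, $c(t)=c^{m/(m-n)}$, $\eta(x)=\frac{m}{m-n}x^n$, $x_0=0$, $x_1=1$, and $f,g$ independent of $t$, and your verification of the identities $G(x)=x^{(m-n)/m}$, $\phi^{-1}(G^{-1}(s))=s^{1/(m-n)}$, and $p(t)=c+\int_0^{\alpha(t)}g(s)\,ds$ matches the computations the paper records before the corollary. The paper gives no separate proof beyond these substitutions, so your write-up simply makes explicit what the authors leave implicit.
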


\begin{rem}
Setting $m=2$ and $n=1$ in Corollary~\ref{cor:y}, we obtain
Lipovan's Theorem~1 of \cite{lip}.
\end{rem}

Theorem~\ref{teor1} permits also to enunciate, for example, the
following corollary:

\begin{cor}
Let $\phi(x)=x^n$ and $\eta(x)=(x^n+1)\ln(x^n+1)$ with $n>0$.
Further, let $c(t)=c>0$ and assume that the functions $f$, $w$ and
$\alpha$ are as in Theorem~\ref{teor1}, with $g\equiv 0$. If
$$u^n(t)\leq
c+\int_0^{\alpha(t)}\left[f(t,s)(u^n(s)+1)\ln(u^n(s)+1)w(u(s))\right]ds,\
t\in\mathbb{R}_0^+,$$ then there exists a number
$\tau\in\mathbb{R}^+$ such that
$$\Psi(G(c))+\int_0^{\alpha(t)}f(t,s)ds\in Dom(\Psi^{-1}),\ t\in[0,\tau],$$
and
$$u(t)\leq
\left\{G^{-1}\left(\Psi^{-1}\left[\Psi(G(c))
+\int_0^{\alpha(t)}f(t,s)ds\right]\right)\right\}^{1/n},\
t\in[0,\tau],$$ where
$$G(x)=\int_{x_0}^x\frac{ds}{(s+1)\ln(s+1)}, \quad x\geq c>x_0>0,$$
and
$$\Psi(x)=\int_{1}^x\frac{ds}{w(e^{e^s\ln(x_0+1)}-1)^{1/n})},\quad x>0,$$
since
$$G^{-1}(x)=e^{e^x\ln(x_0+1)}-1,\quad x>0.$$
\end{cor}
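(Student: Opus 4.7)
The plan is to apply Theorem~\ref{teor1} directly with the specialization $\phi(x) = x^n$, $c(t) \equiv c$, $\eta(x) = (x^n+1)\ln(x^n+1)$, and $g \equiv 0$ prescribed by the statement. First I would verify the hypotheses: $\phi$ is strictly increasing on $\mathbb{R}_0^+$ with $\lim_{x\to\infty}\phi(x)=\infty$, $\eta$ is continuous and nondecreasing with $\eta(x)>0$ for $x>0$, and the constant function $c$ trivially meets the continuity and monotonicity requirements. The remaining assumptions on $f$, $w$, $\alpha$, and $u$ are imposed verbatim by the corollary.

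The next step is to compute $G$ explicitly. Since $\phi^{-1}(s)=s^{1/n}$, one has $\eta(\phi^{-1}(s)) = (s+1)\ln(s+1)$, and the substitution $u=\ln(s+1)$ reduces the defining integral for $G$ to $\int du/u$, whence
$$G(x) = \ln\ln(x+1) - \ln\ln(x_0+1).$$
The same substitution confirms the divergence condition $\int_{x_0}^\infty ds/\eta(\phi^{-1}(s)) = \infty$ required by Theorem~\ref{teor1}. Solving for $x$ in terms of $G(x)$ then produces $G^{-1}(x) = e^{e^x\ln(x_0+1)}-1$, matching the formula asserted in the corollary, and substituting this into the general definition of $\Psi$ (with $x_1=1$) yields the announced expression for $\Psi$.

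With these formulas in hand, the conclusion drops out of (\ref{in2}): because $g \equiv 0$ and $c(t) \equiv c$, the auxiliary quantity $p(t) = G(c(t)) + \int_0^{\alpha(t)} g(t,s)\,ds$ collapses to $G(c)$, and applying $\phi^{-1}(\cdot)=(\cdot)^{1/n}$ to the bound produced by Theorem~\ref{teor1} delivers exactly the stated inequality on $[0,\tau]$. The proof is thus essentially a bookkeeping exercise specializing Theorem~\ref{teor1}; the only step deserving any care is the logarithmic integration that produces the closed forms for $G$ and $G^{-1}$, and I do not anticipate any genuine obstacle.
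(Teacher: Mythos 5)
Your proposal is correct and matches the paper's (implicit) argument exactly: the corollary is presented as a direct specialization of Theorem~\ref{teor1}, and your computation of $G(x)=\ln\ln(x+1)-\ln\ln(x_0+1)$ via the substitution $u=\ln(s+1)$, the resulting formula for $G^{-1}$, the verification that $\int_{x_0}^{\infty}ds/\eta(\phi^{-1}(s))=\infty$, and the collapse of $p(t)$ to $G(c)$ when $g\equiv 0$ are precisely the bookkeeping the paper intends. No gaps.
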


Interesting corollaries can also be obtained from
Theorem~\ref{teor2}. For instance, if we define $\phi$, $c$,
$\eta$, $f$ and $g$ as in the beginning of this section, we are
lead to a result proved in \cite{yuan}:
\begin{cor}\cite[Theorem~2.2]{yuan}
If $$u^m(t)\leq
c^{m/(m-n)}+\frac{m}{m-n}\left(\int_0^{\alpha(t)}f(s)u^n(s)w(u(s))ds+\int_0^t
g(s)u^n(s)w(u(s))ds\right)$$ for all $t\in\mathbb{R}_0^+$, then
\begin{equation}\label{in11}u(t)\leq \left\{\Psi^{-1}\left[\Psi(c)
+\int_0^{\alpha(t)}f(t,s)+\int_0^{t}g(t,s)ds\right]\right\}^{1/(m-n)}
\end{equation}
for all $t\in[0,\tau]$, where $\tau>0$ is chosen such that
$$\Psi(c)
+\int_0^{\alpha(t)}f(t,s)+\int_0^{t}g(t,s)ds\in Dom(\Psi^{-1}).$$
Here $$\Psi(x)=\int_{1}^x\frac{ds}{w(s^{1/(m-n)})},\ x>0.$$
\end{cor}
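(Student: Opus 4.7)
The plan is to obtain this corollary as a direct specialization of Theorem~\ref{teor2}, so the entire work amounts to computing the functions $G$, $G^{-1}$, and $\Psi$ under the prescribed choices and then simplifying the general conclusion (\ref{in7}). First I would check that the hypotheses of Theorem~\ref{teor1} (inherited by Theorem~\ref{teor2}) are satisfied: $\phi(x)=x^m$ is strictly increasing on $\mathbb{R}_0^+$ with $\phi(x)\to\infty$, $\eta(x)=\tfrac{m}{m-n}x^n$ is nondecreasing and positive on $\mathbb{R}^+$, and $c(t)\equiv c^{m/(m-n)}$ is a (constant) positive nondecreasing function. Divergence of $\int^{\infty} ds/\eta(\phi^{-1}(s))$ and continuity of $\alpha$, $w$ are assumed.

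Next I compute $G$. Since $\phi^{-1}(s)=s^{1/m}$, one has $\eta(\phi^{-1}(s))=\tfrac{m}{m-n}s^{n/m}$, so picking $x_0=0$ (valid because $\int_0^x s^{-n/m}\,ds$ converges for $n<m$) gives
$$G(x)=\int_0^x \frac{m-n}{m}\,s^{-n/m}\,ds=x^{(m-n)/m},\qquad G^{-1}(x)=x^{m/(m-n)}.$$
In particular, $G(c(t))=G(c^{m/(m-n)})=c$, which is exactly what is needed to replace $\Psi(G(c(t)))$ in the conclusion of Theorem~\ref{teor2} by $\Psi(c)$.

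Then I compute $\Psi$. With the above formulas, $\phi^{-1}(G^{-1}(s))=(s^{m/(m-n)})^{1/m}=s^{1/(m-n)}$, and choosing $x_1=1$ yields
$$\Psi(x)=\int_1^x \frac{ds}{w(s^{1/(m-n)})},$$
in agreement with the statement of the corollary.

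Finally I substitute into (\ref{in7}). The conclusion of Theorem~\ref{teor2} becomes
$$u(t)\leq \phi^{-1}\!\left\{G^{-1}\!\left(\Psi^{-1}\!\left[\Psi(c)+\int_0^{\alpha(t)}f(t,s)\,ds+\int_0^{t}g(t,s)\,ds\right]\right)\right\},$$
and since $\phi^{-1}\circ G^{-1}$ sends $y$ to $(y^{m/(m-n)})^{1/m}=y^{1/(m-n)}$, one reads off precisely (\ref{in11}). The admissibility condition on $\tau$ carries over verbatim. The only even mildly delicate point is checking that the choice $x_0=0$ is permitted in the definition of $G$; here it is because $\int_0^x ds/s^{n/m}$ is finite (using the condition $m>n$), falling into the second clause of the statement of Theorem~\ref{teor1} concerning the admissible $x_0$.
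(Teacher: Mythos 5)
Your proposal is correct and follows essentially the same route as the paper: the corollary is obtained by specializing Theorem~\ref{teor2} with $\phi(x)=x^m$, $c(t)=c^{m/(m-n)}$, $\eta(x)=\frac{m}{m-n}x^n$, $x_0=0$, $x_1=1$, which gives $G(x)=x^{(m-n)/m}$ and the stated $\Psi$, exactly as the paper sets up at the start of its corollaries section. Your extra care about the admissibility of $x_0=0$ (via convergence of $\int_0^x s^{-n/m}\,ds$ for $m>n$) is a correct reading of the second clause in Theorem~\ref{teor1}.
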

\begin{rem}
In \cite[Theorem~2.2]{yuan} the author assumed that
$\lim_{x\rightarrow\infty}\Psi(x)=\infty$. That means that
inequality (\ref{in11}) is valid for all $t\in\mathbb{R}_0^+$.
\end{rem}

Following some techniques introduced in \cite{GronTS},
it is our belief that the results of Theorems~\ref{teor1} 
and \ref{teor2} admit a generalization for time scales.
This is under study and will be addressed 
in a forthcoming paper.

%%%%%%%%%%%%%%%%%%%%%%%%%%%%%%%%%%%%%%%%%%%%%%%%%%

%%%%%%%%%%%%%%%%%%%%%%%%%%%%%%%%%%%%%%%%%%%%%%%%%%

\end{document}